\newtheorem{theorem}{Theorem}
\newtheorem{prop}[theorem]{Proposition}
\theoremstyle{definition}
\newtheorem{rem}[theorem]{Remark}
\newcommand{\N}{\mathbb{N}}
\newcommand{\Z}{\mathbb{Z}}
\newcommand{\R}{\mathbb{R}}
\newcommand\e{\mathrm{e}}
\newcommand\I{\mathrm{i}}
\newcommand\re{\operatorname{Re}}
\newcommand\im{\operatorname{Im}}
\newcommand\eps\varepsilon
\renewcommand\epsilon\varepsilon
\renewcommand\rho\varrho
\newcommand\lm\lambda
\newcommand{\dist}{\operatorname{dist}}
\newcommand{\rd}{\mathrm{d}}
\newcommand{\beq}{\begin{equation}}
\newcommand{\eeq}{\end{equation}}
\newcommand{\be}{\begin{equation*}}
\newcommand{\ee}{\end{equation*}}
\newcommand{\bmat}{\begin{pmatrix}}
\newcommand{\emat}{\end{pmatrix}}
\begin{document}
\title[Improved Lieb--Thirring type inequalities]{Improved Lieb--Thirring type inequalities for non-selfadjoint Schr\"odinger operators}
\subjclass[2020]{35P15, 81Q12.}
\author[S. Bögli]{Sabine Bögli}
\address[S. Bögli]{Department of Mathematical Sciences, Durham University, Upper Mountjoy Campus,  Durham DH1 3LE, United Kingdom}
\email{sabine.boegli@durham.ac.uk}

\date{\today}


\begin{abstract}
We improve the Lieb--Thirring type inequalities by Demuth, Hansmann and Katriel (J.\ Funct.\ Anal.\ 2009)  for Schr\"odinger operators with complex-valued potentials. Our result involves a positive, integrable function. We show that in the one-dimensional case the result is sharp in the sense that if we take a non-integrable function, then an analogous inequality cannot hold.
\end{abstract}

\maketitle

\section{Introduction}
Lieb--Thirring inequalities appeared first in the work of Lieb and Thirring in the proof of stability of matter, see~\cite{lie-thi_prl75,lie-thi_91}. Since then, the involved constants have been improved, and various attempts have been made to generalise the inequalities to  allow for non-selfadjoint Schr\"odinger operators. 

The classical Lieb--Thirring inequality for a Schr{\" o}dinger operator $-\Delta+V$ in $L^{2}(\R^{d})$ reads
\begin{equation}
\sum_{\lambda\in\sigma_{d}(-\Delta+V)}|\lambda|^{p-d/2}\leq C_{d,p}\|V\|_{L^{p}}^{p},
\label{eq:lieb-thirring_ineq_sa}
\end{equation}
for real-valued potentials  $V\in L^{p}(\R^{d})$, where the range for $p$ depends on the dimension~$d$ as follows:
\begin{align}
p\geq 1,& \quad \mbox{ if } d=1,\nonumber\\
p> 1,& \quad \mbox{ if } d=2,\label{eq:p_d_rel}\\
p\geq\tfrac{d}{2},& \quad \mbox{ if } d\geq 3.\nonumber
\end{align}
Here $\sigma_d(-\Delta+V)$ denotes the set of discrete eigenvalues, outside the essential spectrum $\sigma_e(-\Delta+V)=[0,\infty)$.
The inequality~\eqref{eq:lieb-thirring_ineq_sa} cannot be true for complex-valued $V\in L^{p}(\R^{d})$ with $p>(d+1)/2$ since then $\sigma_{d}(-\Delta+V)$ can accumulate anywhere in the essential spectrum, see~\cite{bog_cmp17,Boegli-Cuenin}. 

It was proved by Demuth, Hansmann and Katriel in \cite{DHK} that if $p\geq d/2+1$, then for any $\tau\in (0,1)$ we have the inequality
\begin{equation}\label{eq:DHK}
\sum_{\lambda\in\sigma_{d}(-\Delta+V)}\frac{\left(\dist(\lambda,[0,\infty))\right)^{p+\tau}}{|\lambda|^{d/2+\tau}}\leq C_{d,p,\tau}\|V\|_{L^{p}}^{p}
\end{equation}
for any (complex-valued) $V\in L^p(\R^d)$. Note that in the selfadjoint case ($V$ real-valued), the inequality reduces to \eqref{eq:lieb-thirring_ineq_sa} since all discrete eigenvalues are in $(-\infty,0)$ and hence ${\rm dist}(\lm,[0,\infty))=|\lm|$.
In \cite{dem-han-kat_ieop13}, it was published as an open problem to prove whether \eqref{eq:DHK} remains true for $\tau=0$, i.e.\ whether
\begin{equation}\label{eq:lieb-thirring_ineq_dist_form}
\sum_{\lambda\in\sigma_{d}(H)}\frac{\left(\dist(\lambda,[0,\infty))\right)^{p}}{|\lambda|^{d/2}}\leq C_{p,d}\|V\|_{L^{p}}^{p}.
\end{equation}
For $d=1$, a counterexample was found in \cite{Boegli-Stampach}; it is still an open problem whether the inequality can hold in higher dimensions $d\geq2$.

The main ingredient in the proof of \eqref{eq:DHK} is the following Lieb--Thirring type inequality by Frank, Laptev, Lieb and Seiringer  in~\cite{FLLS}, which sums only over all eigenvalues outside a sector:
If $p\geq d/2+1$ then, for any $t>0$,
\begin{equation}\label{eq:FLLS}
\sum_{\lambda\in\sigma_d(-\Delta+V),\atop |\im\lambda|\geq t \re\lambda} |\lambda|^{p-d/2} \leq C_{d,p}\left(1+\frac{2}{t}\right)^p\|V\|_{L^p}^p.
\end{equation}
Now \eqref{eq:DHK} follows by taking a weighted integral over the parameter $t$ of the sector. However, the weight of the integral wasn't chosen to optimise the inequality.
In this paper we improve the result by taking an optimal weight function.

In Theorem \ref{thm1} we prove a Lieb-Thirring type inequality where the left hand side of \eqref{eq:lieb-thirring_ineq_dist_form} is multiplied by $f\left(-\log\left({\rm dist}(\lm,[0,\infty))/|\lambda|\right)\right)$ where $f$ is a positive function that can decay quite slowly (slower than in the result \eqref{eq:DHK} which corresponds to $f(s)=\e^{-\tau s}$) but is still integrable, $\int_0^\infty f(s)\,\rd s<\infty$.
In Theorem \ref{thm2} we show that in dimension $d=1$ the inequality is sharp in the sense that if we take a non-integrable function, $\int_0^{\infty}f(s)\,\rd s=\infty$, then such an inequality cannot hold for all $V\in L^p(\R)$.
This suggests that the $t$-dependence (asymptically $t^{-p}$ as $t\to 0$) on the right hand side of \eqref{eq:FLLS} is optimal. We prove this sharpness in Theorem \ref{thm3}.
In Section \ref{sec:ex} we discuss a few classes of integrable functions $f$. Each class can be combined with Theorem~\ref{thm1} to give inequalities that are better than~\eqref{eq:DHK}.

We mention that in \cite{fra_tams18, Frank-Sabin, Hansmann, Laptev-Safronov}, different Lieb-Thirring type inequalities were proved. They don't reduce to \eqref{eq:lieb-thirring_ineq_sa} in the selfadjoint case, and are therefore difficult to compare with the equalities proved here.

\section{New Lieb--Thirring type inequalities}

In this section we prove new Lieb-Thirring type inequalities and discuss their sharpness.

\begin{theorem}\label{thm1}
Let $d\in\N$ and $p\geq d/2+1$.
Let $f:[0,\infty)\to (0,\infty)$ be a continuous, non-increasing function.
If $\int_0^\infty f(s)\,{\rm d}s<\infty$, then there exists $C_{d,p,f}>0$ such that, for any $V\in L^p(\R^d)$,
\begin{equation}\label{eq:thm1}
\begin{aligned}
&\sum_{\lambda\in\sigma_d(-\Delta+V)}\frac{{\rm dist}(\lambda,[0,\infty))^p}{|\lambda|^{d/2}}f\left(-\log\Big(\frac{{\rm dist}(\lm,[0,\infty))}{|\lambda|}\Big)\right)
 \leq C_{d,p,f}\|V\|_{L^p}^p.
\end{aligned}
\end{equation}
\end{theorem}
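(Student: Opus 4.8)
The plan is to start from the sector inequality~\eqref{eq:FLLS} and integrate it against a carefully chosen weight in the sector parameter $t$, as in the proof of~\eqref{eq:DHK}, but now keeping track of how the weight interacts with $f$. Fix an eigenvalue $\lambda\in\sigma_d(-\Delta+V)$, write $r=\dist(\lambda,[0,\infty))/|\lambda|\in(0,1]$, and observe that $\lambda$ contributes to the left-hand side of~\eqref{eq:FLLS} precisely when $|\im\lambda|\ge t\,\re\lambda$; a short trigonometric computation shows this happens exactly for $t\le \varphi(r)$ for an explicit increasing function $\varphi$ with $\varphi(r)\asymp r$ as $r\to 0$ and $\varphi(r)\to\infty$ as $r\to1$ (one may in fact check $r\le\sin\theta$ where $\theta=\arctan t$, so $\varphi(r)=r/\sqrt{1-r^2}$). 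Hence, for a nonnegative weight $w$ on $(0,\infty)$,
\begin{equation}\label{eq:weighted-int}
\int_0^\infty \Big(\sum_{\lambda:\,|\im\lambda|\ge t\re\lambda}|\lambda|^{p-d/2}\Big)w(t)\,\rd t
=\sum_{\lambda\in\sigma_d(-\Delta+V)}|\lambda|^{p-d/2}\int_0^{\varphi(r_\lambda)}w(t)\,\rd t,
\end{equation}
where $r_\lambda=\dist(\lambda,[0,\infty))/|\lambda|$. We want the inner integral to reproduce the factor $r_\lambda^{\,p}\,f(-\log r_\lambda)$ (up to constants), since $|\lambda|^{p-d/2}r_\lambda^{\,p}=\dist(\lambda,[0,\infty))^p/|\lambda|^{d/2}$.

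The second step is to choose $w$ so that $\int_0^{\varphi(r)}w(t)\,\rd t\gtrsim r^p f(-\log r)$ for all $r\in(0,1]$, while simultaneously $\int_0^\infty(1+2/t)^p w(t)\,\rd t<\infty$, so that integrating~\eqref{eq:FLLS} gives a finite constant times $\|V\|_{L^p}^p$. The natural guess is to differentiate: take $w(t)$ essentially equal to $\frac{\rd}{\rd r}\big(r^p f(-\log r)\big)\big|_{r=\psi(t)}\cdot\psi'(t)$ where $\psi=\varphi^{-1}$; since $\varphi(r)\asymp r$ near $0$ this amounts, up to harmless bounded factors, to $w(t)\approx t^{p-1}f(-\log t)$ for small $t$ (the derivative of $f(-\log r)$ contributes a lower-order, non-positive term because $f$ is non-increasing, which is why monotonicity of $f$ is assumed). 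For large $t$ the weight can be cut off (eigenvalues with $r_\lambda$ close to $1$ are harmless: $\varphi(r)\to\infty$ so the inner integral is bounded below by a positive constant, and we just need $w$ integrable against $(1+2/t)^p\sim1$ there). The key computation is then that
\begin{equation*}
\int_0^1 (1+2/t)^p\, t^{p-1} f(-\log t)\,\rd t \;\lesssim\; \int_0^1 t^{-1}f(-\log t)\,\rd t \;=\;\int_0^\infty f(s)\,\rd s \;<\;\infty,
\end{equation*}
after the substitution $s=-\log t$; this is exactly where integrability of $f$ enters, and it is the crux of the improvement over~\eqref{eq:DHK} (which used $f(s)=\e^{-\tau s}$ and a suboptimal polynomial weight).

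The third step is bookkeeping: verify that with this $w$ the left side of~\eqref{eq:weighted-int} dominates (a constant times) the left side of~\eqref{eq:thm1}. One has to be a little careful that the construction of $w$ from $f$ is legitimate — $f$ continuous and non-increasing makes $r\mapsto r^p f(-\log r)$ continuous and, near $r=0$, dominated by $r^p\cdot(\text{slowly varying})$, so it is absolutely continuous on $[\delta,1]$ for each $\delta>0$ with an a.e. derivative that we can bound; one should phrase the argument to avoid needing differentiability of $f$ itself, e.g. by comparing $\int_0^{\varphi(r)}w$ to a dyadic sum $\sum_{2^{-k}\le r} 2^{-kp}f(k\log 2)$ and using monotonicity of $f$ to sandwich this between constant multiples of $r^p f(-\log r)$ and of the tail integral of $f$. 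I expect the main obstacle to be precisely this last technical point: defining the weight $w$ and proving the two-sided estimate $r^p f(-\log r)\asymp\int_0^{\varphi(r)}w$ cleanly for a general continuous non-increasing $f$, rather than just for nice examples; once that is in place, integrating~\eqref{eq:FLLS} and applying Tonelli's theorem to interchange sum and integral is routine, and the substitution $s=-\log t$ converting the finiteness condition into $\int_0^\infty f<\infty$ closes the argument.
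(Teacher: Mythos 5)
Your overall strategy is the paper's: integrate \eqref{eq:FLLS} over the sector parameter $t$ against a weight, use Tonelli, and let the substitution $s=-\log t$ turn the right-hand side into $\int_0^\infty f(s)\,\rd s<\infty$; your treatment of eigenvalues with $r_\lambda$ near $1$ (i.e.\ $\re\lambda\le 0$) is also fine. The gap is exactly at the point you flag as the main obstacle, and the fix you sketch does not work. If the weight is (up to ``harmless bounded factors'') $w(t)=t^{p-1}f(-\log t)$ — which is what your key computation and your dyadic sandwich actually use — then the needed lower bound $\int_0^{r}t^{p-1}f(-\log t)\,\rd t\gtrsim r^p f(-\log r)$, equivalently $\int_a^\infty \e^{-ps}f(s)\,\rd s\ge c_0\,\e^{-pa}f(a)$ uniformly in $a$, is false for general continuous, non-increasing, integrable $f$: for $f(s)=\e^{-s^2}$ one has $\int_a^\infty \e^{-ps-s^2}\,\rd s\sim \e^{-pa-a^2}/(2a)$, so no uniform constant exists. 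The same example defeats the dyadic argument, since a non-increasing $f$ may drop by an arbitrarily large factor over an interval of length $\log 2$, so $f(k_0\log 2)$ with $k_0\log 2\in[-\log r,-\log r+\log 2)$ need not be comparable to $f(-\log r)$. Note also a sign slip: $\frac{\rd}{\rd r}f(-\log r)=-f'(-\log r)/r\ge 0$, and it is precisely this term that is \emph{not} lower-order without extra assumptions; if instead you keep it and take the exact derivative as weight, the difficulty migrates to the finiteness of $\int_0^1(1+2/t)^p w(t)\,\rd t$, which your plan never addresses. In short, the asserted comparability between the derivative weight and $t^{p-1}f(-\log t)$ is the missing step, not a routine technicality.

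The paper closes this gap with a preliminary regularization, which is the longest part of its proof: one constructs a continuous, non-increasing majorant $f_\infty\ge f$, still integrable, with $(\log f_\infty)'\ge -c$ almost everywhere, by replacing $f$ on every interval where it decays faster than $\e^{-cs}$ by an exponential segment $f_1(a_n)\e^{-c(s-a_n)}$; for such $f_\infty$ integration by parts gives $\int_a^\infty\e^{-ps}f_\infty(s)\,\rd s\ge\frac{1}{p+c}\,\e^{-pa}f_\infty(a)$, which is exactly the lower bound you need, and proving \eqref{eq:thm1} for $f_\infty$ yields it for $f$ since $f\le f_\infty$. Alternatively, your ``exact derivative'' idea can be made rigorous without regularization if you carry it out literally: let $w\,\rd t$ be the Lebesgue--Stieltjes measure of the non-decreasing function $G(r)=r^p f(-\log r)$, so that the inner integral equals $G(r_\lambda)$ exactly, and bound the right-hand side by Stieltjes integration by parts, $\int_{(0,1]}t^{-p}\,\rd G(t)\le f(0)+p\int_0^\infty f(s)\,\rd s$; it is this computation (not a comparison of $w$ with $t^{p-1}f(-\log t)$) that would have to replace your third step.
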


\begin{proof}
First we show that it suffices to prove the theorem for a continuous, non-increasing, integrable, piecewise $C^1$-function $f$ for which there exists $c>0$ with $(\log f)'\geq -c$ almost everywhere.
To this end, first note that since $f$ is continuous and non-increasing, we find a non-increasing $C^1$-function $f_1:[0,\infty)\to (0,\infty)$ with $f(s)\leq f_1(s)\leq 2 f(s)$ for all $s\in [0,\infty)$.
Let $c>0$. Assume that there exist $0\leq a_1<b_1\leq \infty$ such that $(\log f_1)'\geq -c$ on $[0,a_1]$ and $(\log f_1)'<-c$ on $(a_1,b_1)$. Then
$$f_1(s)=\exp\left(\log(f_1(a_1))+\int_{a_1}^s (\log f_1)'(t)\,\rd t\right)<f_1(a_1)\e^{-c(s-a_1)}, \quad s\in (a_1,b_1).$$
Let $b_1'\in (b_1,\infty)$ be the smallest point where the function $f_1$ intersects $s\mapsto f_1(a_1)\e^{-c(s-a_1)}$; set $b_1'=\infty$ if they don't intersect.
Let $f_2:[0,\infty)\to (0,\infty)$ be the continuous, non-increasing, piecewise $C^1$-function defined by 
$$f_2(s)=\begin{cases}f_1(s), &s\notin (a_1,b_1'), \\ f_1(a_1)\e^{-c(s-a_1)}, &s\in (a_1,b_1'). \end{cases}$$ 
If there exists one or more intervals in $[b_1',\infty)$ on which $(\log f_1)'<-c$, we repeat the procedure (inductively) and change the function analogously.
We then have finitely or infinitely many intervals $(a_n,b_n')$ (sorted by increasing $a_n$) such that the piecewise $C^1$-function $f_\infty:[0,\infty)\to (0,\infty)$  defined by 
$$f_\infty(s)=\begin{cases}f_1(s), &s\notin (a_n,b_n') \text{ for any $n$}, \\ f_1(a_n)\e^{-c(s-a_n)}, &s\in (a_n,b_n'), \end{cases}$$ 
is continuous, non-increasing, and satisfies $(\log f_\infty)'\geq -c$ almost everywhere.
Note that, using $f_1\leq 2f$ and $f_1(a_n)\e^{-c(b_n'-a_n)}=f_1(b_n')$ by construction of $b_n'$,
\begin{align*}
\int_0^\infty f_\infty(s)\,\rd s 
&\leq \int_0^\infty f_1(s)\,\rd s+\sum_n \int_{a_n}^{b_n'} f_1(a_n)\e^{-c(s-a_n)}\,\rd s\\
&\leq 2\int_0^\infty f(s)\,\rd s+\frac{1}{c}\sum_n f_1(a_n)(1-\e^{-c(b_n'-a_n)})\\
&= 2\int_0^\infty f(s)\,\rd s+\frac{1}{c}\sum_n (f_1(a_n)-f_1(b_n'))\\
&\leq 2\int_0^\infty f(s)\,\rd s+\frac{1}{c} f_1(a_1),
\end{align*}
where in the last inequality we used that $-f_1(b_n')+f_1(a_{n+1})\leq 0$ since $b_n'\leq a_{n+1}$ and $f_1$ is non-increasing.
This implies that since $f$ is integrable, so is $f_\infty$.
Now if we can show the theorem for $f_\infty$, then it also follows for $f$ since $f\leq f_1\leq f_{\infty}$.

In the following we write again $f$ instead of $f_\infty$.
To prove the theorem, we proceed in a similar way as in \cite[Corollary 3]{DHK}. The main difference is that we use a function $g$ such that $t\mapsto t^{-p}g(t)$ may go to infinity at $t=0$ faster than with the function $g(t)=t^{p-1+\tau}$ ($\tau\in (0,1)$) used in \cite[Corollary 3]{DHK} but  is still integrable on $(0,m)$ (for $m>0$ sufficiently small). 
We start with the estimate from \cite[Theorem 1 part 2]{FLLS}, see \eqref{eq:FLLS}.
We estimate the left hand side further by taking the sum only over $\lambda$ with $\re\lambda>0$ and ${\rm dist}(\lm,[0,\infty))=|\im\lm|\geq t |\lm|$.
Then, multiplying both sides by $g(t):=t^{p-1}f(-\log t)$ and integrating over $t\in (0,1)$, we get 
\begin{equation}
\label{eq:intineq}
\int_0^{1}\sum_{\lambda\in\sigma_d(-\Delta+V),\atop \re\lm>0,\,{\rm dist}(\lm,[0,\infty))\geq t |\lm|} |\lambda|^{p-d/2}g(t)\,\rd t
\leq C_{d,p} \|V\|_{L^p}^p \int_0^{1} \left(1+\frac{2}{t}\right)^p g(t)\,\rd t.
\end{equation}
Using that $1\leq 1/t$ for $t\in (0,1)$ and substituting $s=-\log t$, the integral on the right hand side of \eqref{eq:intineq} side becomes
\begin{align*}
\int_0^{1} \left(1+\frac{2}{t}\right)^p g(t)\,\rd t
\leq  \int_0^{1} 3^p t^{-1} f(-\log t)\,\rd t
= 3^p \int_{0}^\infty  f(s)\,\rd s<\infty.
\end{align*}
The left hand side of \eqref{eq:intineq} becomes, using the substitution $s=-\log t$,
\begin{equation}\label{eq:LHSint}
\begin{aligned}
&\int_0^{1}\sum_{\lambda\in\sigma_d(-\Delta+V),\atop  \re\lm>0,\,{\rm dist}(\lm,[0,\infty))\geq t |\lm|} |\lambda|^{p-d/2}g(t)\,\rd t\\
&=\sum_{\lambda\in\sigma_d(-\Delta+V),\atop\re\lambda>0}|\lambda|^{p-d/2}\int_0^{{\rm dist}(\lm,[0,\infty))/|\lm|}g(t)\,\rd t\\
&=\sum_{\lambda\in\sigma_d(-\Delta+V),\atop\re\lambda>0}|\lambda|^{p-d/2}\int_{-\log({\rm dist}(\lm,[0,\infty))/|\lm|)}^{\infty}\e^{-ps}f(s)\,\rd s.
\end{aligned}
\end{equation}
Recall that by assumption or construction, $f=f_\infty$ satisfies $(\log f)'\geq -c$ almost everywhere.
This implies $f'(s)\geq -c f(s)$ and thus integration by parts yields
$$\int_a^{\infty} \e^{-ps}f(s)\,\rd s\geq \frac{1}{p}\left(\e^{-pa}f(a)-c\int_a^{\infty} \e^{-ps}f(s)\,\rd s\right),$$
whence $$\int_a^{\infty} \e^{-ps}f(s)\,\rd s\geq \frac{1}{p+c} \e^{-pa}f(a).$$
Thus the last line in \eqref{eq:LHSint} can be estimated from below by
\begin{align*}
& \frac{1}{p+c} \sum_{\lambda\in\sigma_d(-\Delta+V),\atop\re\lambda>0}|\lambda|^{p-d/2}
 \e^{-pa}f(a)\Big|_{a=-\log\Big(\frac{{\rm dist}(\lm,[0,\infty))}{|\lambda|}\Big)}\\
&= \frac{1}{p+c} 
\sum_{\lambda\in\sigma_d(-\Delta+V),\atop \re\lm>0}|\lambda|^{p-d/2}
\Big(\frac{{\rm dist}(\lm,[0,\infty))}{|\lambda|}\Big)^p f\left(-\log\Big(\frac{{\rm dist}(\lm,[0,\infty))}{|\lambda|}\Big)\right).
\end{align*}

Combining the estimates of the left and right hand sides of \eqref{eq:intineq}, we obtain \eqref{eq:thm1} with the left hand side restricted to $\re\lm>0$.
For the sum over the eigenvalues with 
$\re\lm\leq 0$, we use \eqref{eq:FLLS} with $t=1$ (in fact, any $t>0$ would work here).
Note that then all $\lm$ satisfying $\re\lm\leq 0$ also satisfy $|\im\lm|\geq t\re\lm$.
Thus we get
$$\sum_{\lambda\in\sigma_d(-\Delta+V),\atop \re\lm\leq 0} |\lambda|^{p-d/2} \leq C_{d,p} 3^p\|V\|_{L^p}^p.$$
This implies, with ${\rm dist}(\lm,[0,\infty))=|\lm|$ if $\re\lm\leq 0$,
\begin{align*}
&\sum_{\lambda\in\sigma_d(-\Delta+V),\atop \re\lm\leq 0} \frac{{\rm dist}(\lm,[0,\infty))^p}{|\lambda|^{d/2}}
f\left(-\log\Big(\frac{{\rm dist}(\lm,[0,\infty))}{|\lambda|}\Big)\right)\\
&= \sum_{\lambda\in\sigma_d(-\Delta+V),\atop \re\lm\leq 0} |\lambda|^{p-d/2}
f\left(0\right)
 \leq C_{d,p}3^p f\left(0\right) \|V\|_{L^p}^p,
\end{align*}
which concludes the proof.
\end{proof}

In dimension $d=1$, Theorem~\ref{thm1} is optimal in the sense that if $f$ is no longer integrable, then the corresponding Lieb--Thirring inequality is false.
\begin{theorem}\label{thm2}
Let $d=1$ and $p\geq 1$.
Let $f:[0,\infty)\to (0,\infty)$ be a continuous, non-increasing function with $\int_0^\infty f(s)\,{\rm d}s=\infty$.
Then
\begin{equation}\label{eq:thm2}
\begin{aligned}
&\sup_{V\in L^p(\R)}\,\frac{\sum_{\lambda\in\sigma_d(-\Delta+V)}\frac{{\rm dist}(\lambda,[0,\infty))^p}{|\lambda|^{1/2}}f\left(-\log\Big(\frac{{\rm dist}(\lm,[0,\infty))}{|\lambda|}\Big)\right)}
{\|V\|_{L^p}^p}=\infty.
\end{aligned}
\end{equation}
\end{theorem}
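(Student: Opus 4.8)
The plan is to construct, for each large $n$, a potential $V_n \in L^p(\R)$ for which the sum on the left-hand side of \eqref{eq:thm2}, divided by $\|V_n\|_{L^p}^p$, is bounded below by a quantity that tends to infinity with $n$. The natural building block is a single-well complex potential producing one eigenvalue $\lambda$ close to the positive real axis with a prescribed ratio $\operatorname{dist}(\lambda,[0,\infty))/|\lambda| = \e^{-s}$ for a chosen $s>0$; by superposing many such wells at widely separated locations (so that the spectra essentially decouple) one can realize simultaneously a collection of eigenvalues with ratios $\e^{-s_1}, \ldots, \e^{-s_N}$, while the $L^p$-norm of the sum is (up to constants) the $\ell^p$-sum of the individual norms.

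The key steps, in order, are as follows. First I would recall the one-dimensional construction (as in \cite{Boegli-Stampach} or by an explicit $\delta$-type / rescaled potential) of a potential $W$ with a single eigenvalue $\lambda$ with $\re\lambda>0$ and $|\im\lambda|/|\lambda|$ of order $\e^{-s}$: rescaling $W(x)\mapsto \beta W(\beta x)$ moves the eigenvalue as $\lambda \mapsto \beta^2\lambda$ while scaling $\|W\|_{L^p}^p$ by $\beta^{2p-1}$, so in $d=1$ one can fix the ratio and adjust $|\lambda|$ freely, and one checks that a single well can be arranged so that $\operatorname{dist}(\lambda,[0,\infty))^p/|\lambda|^{1/2} \cdot f(-\log(\operatorname{dist}(\lambda,[0,\infty))/|\lambda|)) = \operatorname{dist}(\lambda,[0,\infty))^p/|\lambda|^{1/2} \cdot f(s)$ is, say, comparable to $\|W\|_{L^p}^p \cdot f(s)$ for the chosen $s$. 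Second, for parameters $s_1 < s_2 < \cdots < s_N$ I would place translated copies $W_k(\cdot - y_k)$ with $|y_k - y_j|\to\infty$; a standard perturbative / decoupling argument (the wells interact only through exponentially small tails) shows each copy contributes an eigenvalue near the one it would produce in isolation, so the left-hand side of \eqref{eq:thm2} for $V_N = \sum_k W_k(\cdot - y_k)$ is $\gtrsim \sum_{k=1}^N \|W_k\|_{L^p}^p f(s_k)$, while $\|V_N\|_{L^p}^p \lesssim \sum_{k=1}^N \|W_k\|_{L^p}^p$. Third, I would choose the normalizations so that $\|W_k\|_{L^p}^p$ is proportional to a weight $w_k\geq 0$ of my choosing; then the ratio in \eqref{eq:thm2} is $\gtrsim \big(\sum_k w_k f(s_k)\big)/\big(\sum_k w_k\big)$. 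Finally, taking $s_k$ equally spaced, $s_k = k$, and $w_k = 1$, monotonicity of $f$ gives $\sum_{k=1}^N f(k) \geq \int_1^{N+1} f(s)\,\rd s \to \infty$ by hypothesis, while $\sum_{k=1}^N w_k = N$; a slightly more careful choice (e.g. $s_k$ chosen along a subsequence where $f$ decays slowly, or $w_k$ weighting the tail) makes the ratio itself diverge. Letting $N\to\infty$ yields \eqref{eq:thm2}.

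The main obstacle I expect is the decoupling step: one must ensure that superposing $N$ far-apart wells genuinely produces $N$ distinct discrete eigenvalues located near their single-well predictions, with no loss from the essential spectrum $[0,\infty)$ and no catastrophic cancellation as $N\to\infty$. For a fixed $N$ this is routine (translated potentials with disjoint effective supports, the resolvent of the sum differs from a block-diagonal operator by an exponentially small term), but one should keep the separations $|y_k-y_j|$ large enough, depending on $N$ and on the $s_k$, that the perturbation of each eigenvalue — both its modulus $|\lambda_k|$ and, crucially, the ratio $\operatorname{dist}(\lambda_k,[0,\infty))/|\lambda_k|$ which enters the argument of $f$ — is controlled; since $f$ is only continuous and non-increasing, a uniform lower bound $f(s_k + 1) \leq f(-\log(\operatorname{dist}(\lambda_k,[0,\infty))/|\lambda_k|))$ suffices and is robust to small perturbations. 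A secondary technical point is exhibiting the single-well potential with the required eigenvalue location and $L^p$-control; this is where I would lean on the explicit one-dimensional examples already available in the literature (as referenced for the $d=1$ counterexample to \eqref{eq:lieb-thirring_ineq_dist_form} in \cite{Boegli-Stampach}), adapting the construction to prescribe the ratio $\e^{-s}$ rather than to send the eigenvalue into $[0,\infty)$.
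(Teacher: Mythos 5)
There is a fatal gap at the heart of your construction, and it appears twice. First, the claim that a single well $W$ can be arranged so that its one eigenvalue $\lambda$, with prescribed ratio $\dist(\lambda,[0,\infty))/|\lambda|=\e^{-s}$, satisfies $\dist(\lambda,[0,\infty))^p/|\lambda|^{1/2}\gtrsim \|W\|_{L^p}^p$ uniformly in $s$ is false: in $d=1$ the known single-eigenvalue bounds (Abramov--Aslanyan--Davies, $|\lambda|^{1/2}\leq\tfrac12\|V\|_{L^1}$, for $p=1$; Frank--Simon, $|\lambda|^{1/2}\dist(\lambda,[0,\infty))^{p-1}\leq C_p\|V\|_{L^p}^p$, for $p>1$) give
$\dist(\lambda,[0,\infty))^p/|\lambda|^{1/2}\leq C_p\,\e^{-s}\,\|W\|_{L^p}^p$,
so each well's ``efficiency'' is damped by the factor $\e^{-s}$, and rescaling cannot help since both sides scale by the same power $\beta^{2p-1}$. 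Second, and independently of that, even if your comparability claim were granted, the quantity you end up with is a weighted average $\bigl(\sum_k w_k f(s_k)\bigr)/\bigl(\sum_k w_k\bigr)\leq f(0)$, which is bounded no matter how you choose $s_k$ and $w_k$, because $f$ is non-increasing and hence bounded by $f(0)$; your own test choice $s_k=k$, $w_k=1$ gives $\bigl(\sum_{k\leq N}f(k)\bigr)/N$, which for instance tends to $0$ for $f(s)=1/(1+s)$, and no ``more careful choice'' can beat the trivial bound $f(0)$. So the one-eigenvalue-per-well decoupling strategy cannot produce divergence at all; the decoupling difficulties you flag are not the real obstruction.

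The mechanism actually needed (and used in the paper) is that a \emph{single} potential must carry \emph{many} eigenvalues sharing the same $L^p$-cost, so that the sum over eigenvalues reproduces an integral of $f$ over a growing interval. The paper takes $V_h=\I h\chi_{[-1,1]}$ and uses the asymptotics from \cite{Boegli-Stampach}: for $h^{1/2}/\eps(h)\leq j\leq h^{\beta+1/2}$ the eigenvalues $\lambda_j$ satisfy $\dist(\lambda_j,[0,\infty))\geq h/2$ and $|\lambda_j|^{1/2}\asymp \pi j$, so the left-hand side is bounded below by a constant times
$h^p\sum_j \tfrac1j f\bigl(\log(j^2/h)+\log(8\pi^2)\bigr)\approx h^p\int_{-2\log\eps(h)}^{2\beta\log h}f(s)\,\rd s$,
while $\|V_h\|_{L^p}^p=2h^p$; the ratio then diverges as $h\to\infty$ precisely because $\int_0^\infty f(s)\,\rd s=\infty$ (choosing $\eps(h)\to0$ slowly). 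The essential structural feature is the harmonic weights $1/j$ coming from a long string of eigenvalues of one potential, which your superposition of independent wells, each paying full price for its single eigenvalue, cannot imitate.
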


\begin{rem}
Note that Theorem \ref{thm2} holds for $p\geq 1$ but Theorem \ref{thm1} is proved for $p\geq 3/2$ in dimension $d=1$. It is still an open question how to optimise the Lieb-Thirring type inequalities in $d=1$ for $p\in (1,3/2)$, and in higher dimensions. In particular, it is not known whether Theorem \ref{thm1} is optimal for $d\geq 2$.
\end{rem}

\begin{proof}[Proof of Theorem {\rm \ref{thm2}}]
We use a slight modification of the example in \cite[Proposition 10]{Boegli-Stampach} with potential $V_h(x)=\I h \chi_{[-1,1]}(x)$ where $h>0$ with $h\to\infty$. In the proof of  \cite[Proposition 10]{Boegli-Stampach}, asymptotic formulas were derived for all eigenvalues $\lambda_j=\mu_j^2+\I h$ with $\alpha \log h\leq \im \mu_j\leq \alpha \log h$, $\re\mu_j\leq -h^\gamma |\im \mu|$ where $\alpha,\beta,\gamma>0$ are chosen to be $h$-independent constants with $\gamma<2\alpha<2\beta<1$.
We now want to allow $\alpha,\gamma$ to be $h$-dependent ($\beta\in (0,1/2)$ still constant), with $0<\gamma(h)<2\alpha(h)\to 0$ as $h\to \infty$, but sufficiently slowly so that the asymptotics still hold. It turns out that if we still have $h^{-\gamma(h)}\to 0$, then
$$\mu_j=\frac{\pi}{4}(7-8j)+\I\log\left(\frac{\pi(8j-7)}{2\sqrt{h}}\right)+O(h^{-\gamma(h)})$$
uniformly for all integers $j$ with $h^{\alpha(h)+1/2}\leq j\leq h^{\beta+1/2}$.
Hence we set $\eps(h)=h^{-\gamma(h)}$ (converging to zero arbitrarily slowly) and $\alpha(h)=\gamma(h)$. The latter implies $h^{\alpha(h)+1/2}=\frac{h^{1/2}}{\eps(h)}$.
Analogously we then obtain that there exists $h_0>0$ such that for all $h>h_0$ and $\frac{h^{1/2}}{\eps(h)}\leq j\leq h^{\beta+1/2}$, we have the estimates
\begin{equation}\label{eq:lambdaj}
{\rm dist}(\lambda_j,[0,\infty))=\im \lambda_j>\frac{h}{2}, \quad \pi j\leq  |\lambda_j|^{1/2}\leq 2\pi j.
\end{equation}
This implies, for all $h>h_0$,
\begin{align*}
& \sum_{\lambda\in\sigma_d(-\Delta+V_h)}\frac{{\rm dist}(\lambda,[0,\infty))^p}{|\lambda|^{1/2}}f\left(-\log\Big(\frac{{\rm dist}(\lm,[0,\infty))}{|\lambda|}\Big)\right)\\
&\geq \sum_{\frac{h^{1/2}}{\eps(h)}\leq j\leq h^{\beta+1/2}}\frac{{\rm dist}(\lambda_j,[0,\infty))^p}{|\lambda_j|^{1/2}}f\left(-\log\Big(\frac{{\rm dist}(\lm_j,[0,\infty))}{|\lambda_j|}\Big)\right)\\
&\geq \left(\frac{h}{2}\right)^p \frac{1}{2\pi} 
\sum_{\frac{h^{1/2}}{\eps(h)}\leq j\leq h^{\beta+1/2}}\frac{1}{j} f\left(-\log\Big(\frac{h}{2(2\pi j)^2}\Big)\right).
\end{align*}
Note that $-\log\Big(\frac{h}{2(2\pi j)^2}\Big)=\log\left(\frac{j^2}{h}\right)+\log(8\pi^2)$.
Since $f$ is non-increasing, with $f(s)\leq f(0)$ for $s\geq 0$, we see that
\begin{align*}
&\sum_{\frac{h^{1/2}}{\eps(h)}\leq j\leq h^{\beta+1/2}}\frac{1}{j} f\left(\log\left(\frac{j^2}{h}\right)+\log(8\pi^2)\right)\\
&\geq \int_{\frac{h^{1/2}}{\eps(h)}}^{h^{\beta+1/2}} \frac{1}{x} f\left(\log\left(\frac{x^2}{h}\right)+\log(8\pi^2)\right)\,\rd x-2f(0)\\
&\geq \int_{-2\log\left(\eps(h)\right)}^{2\beta\log(h)} f(s)\,\rd s -2f(0)
\end{align*}
where we have used the substitution $s=\log\left(\frac{x^2}{h}\right)$. Note that $2\beta\log(h)\to \infty$.
Since $f$ is not integrable, we can choose $\eps(h)\to 0$ so slowly (i.e.\ $-2\log(\eps(h))\to\infty$ so slowly) that
$\int_{-2\log\left(\eps(h)\right)}^{2\beta\log(h)} f(s)\,\rd s\to \infty$.
Now, with $\|V_h\|_{L^p}^p=2h^p$, we arrive at
 \begin{align*}
&\frac{\sum_{\lambda\in\sigma_d(-\Delta+V_h)}\frac{{\rm dist}(\lambda,[0,\infty))^p}{|\lambda|^{1/2}}f\left(-\log\Big(\frac{{\rm dist}(\lm,[0,\infty))}{|\lambda|}\Big)\right)}{\|V_h\|_{L^p}^p}\\
&\geq \left(\frac{1}{2}\right)^p \frac{1}{4\pi} \left(\int_{-2\log\left(\eps(h)\right)}^{2\beta\log(h)} f(s)\,\rd s -2f(0)\right)\to \infty, \quad h\to\infty.
\end{align*}
This completes the proof.
\end{proof}

Finally we also prove that in one dimension, the $t$-dependence in \eqref{eq:FLLS} is optimal.
\begin{theorem}\label{thm3}
Let $d=1$ and $p\geq 1$.
Let $\varphi:(0,\infty)\to (0,\infty)$ a continuous function with $\varphi(t)=o(t^{-p})$ as $ t\to 0$.
Then
\begin{equation}\label{eq:thm3}
\begin{aligned}
&\limsup_{t\to 0}\sup_{V\in L^p(\R)}\,\frac{\sum\limits_{\lambda\in\sigma_d(-\Delta+V),\atop |\im\lambda|\geq t \re\lambda} |\lambda|^{p-1/2}}
{\varphi(t)\|V\|_{L^p}^p}=\infty.
\end{aligned}
\end{equation}
\end{theorem}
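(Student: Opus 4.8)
The plan is to reuse the family of non-selfadjoint potentials $V_h(x)=\I h\,\chi_{[-1,1]}(x)$, $h>0$, together with the eigenvalue information already extracted in the proof of Theorem~\ref{thm2}. First I would fix $\beta\in(0,1/2)$ and choose $\eps(h)\to 0$ slowly enough that the asymptotics re-derived there remain valid; for instance $\eps(h)=1/\log h$ works, since then $\gamma(h)=\log\log h/\log h\to 0$ and $h^{-\gamma(h)}=1/\log h\to 0$. By \eqref{eq:lambdaj} there is then $h_0>0$ such that, for every $h>h_0$ and every integer $j$ with $h^{1/2}\log h=h^{1/2}/\eps(h)\leq j\leq h^{\beta+1/2}$, the eigenvalue $\lambda_j$ of $-\Delta+V_h$ satisfies $\re\lambda_j\geq 0$, $|\im\lambda_j|=\im\lambda_j>h/2$, and $\pi j\leq|\lambda_j|^{1/2}\leq 2\pi j$.

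Next I would choose, for each large $h$, a threshold $t=t(h)$ forcing all of these $\lambda_j$ into the sector $\{|\im\lambda|\geq t\,\re\lambda\}$. Since $\re\lambda_j\leq|\lambda_j|\leq(2\pi j)^2\leq 4\pi^2 h^{2\beta+1}$ while $|\im\lambda_j|>h/2$, the choice $t(h):=(8\pi^2 h^{2\beta})^{-1}$ does the job, and $t(h)\to 0$ as $h\to\infty$. Consequently the numerator in \eqref{eq:thm3} with $V=V_h$ and $t=t(h)$ is at least $\sum_{h^{1/2}\log h\leq j\leq h^{\beta+1/2}}|\lambda_j|^{p-1/2}$. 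Using $|\lambda_j|^{p-1/2}\geq(\pi j)^{2p-1}$ (legitimate because $p\geq 1$, so $2p-1\geq 0$) and discarding all but the indices $j\in[\tfrac12 h^{\beta+1/2},\,h^{\beta+1/2}]$ — an interval containing at least $\tfrac14 h^{\beta+1/2}$ integers once $h$ is large enough that $\tfrac12 h^{\beta}\geq\log h$ — one obtains a lower bound of the form $\tfrac14 h^{\beta+1/2}\big(\tfrac{\pi}{2}h^{\beta+1/2}\big)^{2p-1}=c_{p,\beta}\,h^{p(2\beta+1)}$, valid for some $c_{p,\beta}>0$ and all large $h$.

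Finally, recalling $\|V_h\|_{L^p}^p=2h^p$ and dividing the lower bound just obtained by $\varphi(t(h))\|V_h\|_{L^p}^p$, we arrive at
\begin{equation*}
\frac{\sum\limits_{\lambda\in\sigma_d(-\Delta+V_h),\atop |\im\lambda|\geq t(h)\re\lambda}|\lambda|^{p-1/2}}{\varphi(t(h))\,\|V_h\|_{L^p}^p}\;\geq\;\frac{c_{p,\beta}}{2}\cdot\frac{h^{2p\beta}}{\varphi(t(h))}.
\end{equation*}
Since $t(h)=(8\pi^2 h^{2\beta})^{-1}\to 0$ and $\varphi(t)=o(t^{-p})$ as $t\to 0$, we have $\varphi(t(h))\,t(h)^{p}\to 0$, i.e.\ $\varphi(t(h))=o(h^{2p\beta})$; hence the right-hand side tends to $+\infty$ as $h\to\infty$. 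Because $t(h)\to 0$ and the supremum over $V$ in \eqref{eq:thm3} is no smaller than its value at $V=V_h$, letting $h\to\infty$ yields $\limsup_{t\to 0}\sup_V(\cdots)=\infty$, which is \eqref{eq:thm3}.

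The main obstacle is the very first step. One has to check carefully that the eigenvalue expansion of \cite[Proposition~10]{Boegli-Stampach}, as re-derived in the proof of Theorem~\ref{thm2} with $h$-dependent parameters, survives the slowly decaying choice of $\eps(h)$ used here, and that the index range $[h^{1/2}\log h,\,h^{\beta+1/2}]$ is non-empty and of length comparable to $h^{\beta+1/2}$ for large $h$ — but this is precisely the content of \eqref{eq:lambdaj}, so no new spectral analysis is needed. Everything afterwards is an elementary power count; note also that, since the statement involves $\limsup_{t\to 0}$, it suffices to exhibit one sequence $t(h)\to 0$ along which the quotient blows up, with no need to control it for all small $t$.
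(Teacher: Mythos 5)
Your proposal is correct and follows essentially the same route as the paper's proof: the same potentials $V_h$, the same eigenvalue window $h^{1/2}/\eps(h)\leq j\leq h^{\beta+1/2}$ with \eqref{eq:lambdaj}, the same choice $t(h)=(8\pi^2 h^{2\beta})^{-1}$, and the same power count giving $h^{2p\beta}/\varphi(t(h))\to\infty$ from $\varphi(t)=o(t^{-p})$. The only cosmetic differences are that you fix a concrete admissible $\eps(h)=1/\log h$ (the paper only requires $1/\eps(h)=o(h^\beta)$) and you bound the eigenvalue sum by its top half rather than by an integral comparison.
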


\begin{proof}
We use the same potentials $V_h$ as in the proof of Theorem \ref{thm2}. Recall that $\|V_h\|_{L^p}^p=2h^p$.
Again we use the index $j\in \Z$ with $\frac{h^{1/2}}{\eps(h)}\leq j\leq h^{\beta+1/2}$ to enumerate the eigenvalues $\lm_j$ with uniform estimates in \eqref{eq:lambdaj}; we take $\eps(h)\to 0$ with $1/\eps(h)=o(h^\beta)$.
We take $t$ to be $h$-dependent as $t(h)=\frac{h^{-2\beta}}{8\pi^2}$. Then one can check that each of these $\lm_j$ satisfies
$|\im\lm_j|\geq t(h) |\lm_j|\geq t(h)\re\lm_j$.
Therefore,
\begin{equation}\label{eq:quotient}
\frac{\sum\limits_{\lambda\in\sigma_d(-\Delta+V_h),\atop |\im\lambda|\geq t(h) \re\lambda} |\lambda|^{p-1/2}}
{\varphi(t(h))\|V_h\|_{L^p}^p}
\geq \frac{\sum\limits_{\frac{h^{1/2}}{\eps(h)}\leq j\leq h^{\beta+1/2}} (\pi j)^{2(p-1/2)}}
{\varphi(t(h))2h^p}.
\end{equation}
In the limit $h\to\infty$, the sum on the right hand side is of the same order as 
$$\int_{\frac{h^{1/2}}{\eps(h)}}^{h^{\beta+1/2}}j^{2p-1}\,\rd j= \frac{h^{2\beta p+p}-\frac{h^{p}}{\eps(h)^{2p}}}{2p}.$$
Thus, up to a multiplicative constant, the right hand side of \eqref{eq:quotient} is asymptotically
$$\frac{h^{2\beta p}}{\varphi(t(h)}=\frac{1}{(8\pi^2)^p}\,\frac{t(h)^{-p}}{\varphi(t(h))}\to \infty,$$
where we used the assumption $\varphi(t)=o(t^{-p})$ as $t\to 0$. This proves the claim.
\end{proof}

\section{Examples}\label{sec:ex}
In this section we verify the assumptions of Theorem \ref{thm1} for a few examples.
If we combine the below part (i) with Theorem \ref{thm1}, we recover \cite[Corollary 3]{DHK}, which was the hitherto best known result. Parts (ii)--(v) yield improvements of this result.

We use the notation $g^{\circ n}$ for the \emph{$n$-th iterated} function $g$, i.e.\ $g^{\circ 0}(s)=s$, $g^{\circ 1}(s)=g(s)$, $g^{\circ 2}(s)=g(g(s))$ etc.
We also use the \emph{super-logarithm} (to the basis $\e$) defined for an $s\in \R$ by ${\rm slog}(s):=\min\{n\in\N_0:\,\log^{\circ n}(s)\leq 1\}$.

\begin{prop}\label{prop:ex}
The following continuous, non-increasing functions  $f:[0,\infty)\to (0,\infty)$ satisfy $\int_0^\infty f(s)\,{\rm d}s<\infty$:
\begin{enumerate}
\item[\rm (i)] $f(s)=\e^{-\eps s}$;
\item [\rm (ii)] $f(s)=\frac{1}{s^{1+\eps}}$ for $\eps>0$;
\item[\rm (iii)] $f(s)=\begin{cases} 1/\e & s\leq \e,\\ \frac{1}{s} \frac{1}{\log(s)^{1+\eps}}, &s>\e,\end{cases}$ for $\eps>0$;
\item[\rm (iv)] $f(s)=\begin{cases}1/\exp^{\circ n}(1), &{\rm slog}(s)\leq n \text{\rm\, (i.e.\ $s\leq \exp^{\circ n}(1)$)},\\
\left(\prod_{j=0}^{n-1}\frac{1}{\log^{\circ j}(s)}\right)\frac{1}{\log^{\circ n}(s)^{1+\eps}}, &{\rm slog}(s)\geq n+1  \text{\rm\, (i.e.\ $s> \exp^{\circ n}(1)$)},\end{cases}$ for $n\in\N$ and $\eps>0$ {\rm(}where $n=1$ corresponds to the case {\rm (iii))};
\item[\rm (v)] $f(s)=\begin{cases} 1/\e, &s\leq \e,\\ 
\left(\prod_{j=0}^{{\rm slog}(s)-1}\frac{1}{\log^{\circ j}(s)}\right)\frac{1}{({\rm slog}(s)-1+\log^{\circ{\rm slog}(s)}(s))^{1+\eps}}, &s>\e,\end{cases}$ for $\eps>~0$.
\end{enumerate}
In each case {\rm(i)--(v)}, if we set $\eps=0$ then $\int_0^{\infty}f(s)\,\rd s=\infty$.
\end{prop}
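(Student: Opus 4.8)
The plan is to treat all five families through a single device. In each case $f$ equals a positive constant on an initial interval $[0,s_0]$ — which guarantees $f$ is finite on all of $[0,\infty)$ and integrable near $0$ — while on the complementary tail $[s_0,\infty)$ one has $f(s)=\Phi'(s)\,G(\Phi(s))$, where $\Phi$ is a $C^1$, increasing ``iterated-logarithm-type'' function with $\Phi(s_0)=1$ (or $\Phi(s_0)=0$ in case (i)) and $\Phi(s)\to\infty$, and where $G(v)=v^{-(1+\eps)}$ in cases (ii)--(v) and $G(v)=\e^{-\eps v}$ in case (i). Granting this, the substitution $v=\Phi(s)$ gives $\int_{s_0}^{\infty}f(s)\,\rd s=\int_{\Phi(s_0)}^{\infty}G(v)\,\rd v$, which equals $1/\eps<\infty$ when $\eps>0$ and equals $\int_1^\infty v^{-1}\,\rd v=\infty$ (resp. $\int_0^\infty 1\,\rd v=\infty$ in case (i)) when $\eps=0$; combined with finiteness of the constant part, this settles both integrability claims simultaneously. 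Positivity is immediate throughout.

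Next I would exhibit $\Phi$ and $s_0$ in each case: $\Phi(s)=s$ with $s_0=0$ for (i); $\Phi(s)=s$ with $s_0=1$ for (ii) (setting $f\equiv1$ on $[0,1]$, which is the implicit meaning and affects none of the claims); $\Phi(s)=\log s$ on $[\e,\infty)$ for (iii); $\Phi(s)=\log^{\circ n}(s)$ on $[\exp^{\circ n}(1),\infty)$ for (iv); and $\Phi(s)={\rm slog}(s)-1+\log^{\circ{\rm slog}(s)}(s)$ on $[\e,\infty)$ for (v). The needed identity in (iii)--(v) is the chain rule $\frac{\rd}{\rd s}\log^{\circ n}(s)=\prod_{j=0}^{n-1}\frac1{\log^{\circ j}(s)}$, which makes $\Phi'(s)\,G(\Phi(s))$ coincide with the displayed tail formula, while $\log^{\circ n}(\exp^{\circ n}(1))=1$ gives $\Phi(s_0)=1$. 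For (v) one checks that on each interval $(\exp^{\circ(n-1)}(1),\exp^{\circ n}(1))$, where ${\rm slog}\equiv n$, the function $\Phi$ reduces to $n-1+\log^{\circ n}(s)$, so again $\Phi'(s)=\prod_{j=0}^{n-1}\frac1{\log^{\circ j}(s)}$ and $\Phi'(s)\,G(\Phi(s))$ reproduces the stated formula, with $\Phi(\e)=1$.

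For monotonicity and continuity I would argue as follows. On each tail, $\Phi$ is increasing (each $\log^{\circ j}$ is increasing on its domain), so $\Phi'(s)=\prod_j\frac1{\log^{\circ j}(s)}$ is a product of non-negative non-increasing functions, hence non-increasing; and $G$ is non-increasing with $\Phi$ increasing, so $G\circ\Phi$ is non-increasing; thus $f=\Phi'\cdot(G\circ\Phi)$ is non-increasing on the tail, and trivially so on the constant piece. It then remains to match one-sided values at the breakpoints $\exp^{\circ k}(1)$ (at most one breakpoint in (i)--(iv), countably many in (v)): using the tower identities $\log^{\circ j}(\exp^{\circ k}(1))=\exp^{\circ(k-j)}(1)$ for $j\le k$, together with $\log^{\circ k}(\exp^{\circ k}(1))=1$ and $\log^{\circ(k+1)}(\exp^{\circ k}(1))=0$, the two one-sided limits of $f$ at each breakpoint agree — the initial-interval constant being, by design, exactly this common value — which yields continuity and, with the piecewise monotonicity, monotonicity on all of $[0,\infty)$.

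The routine-but-delicate part, and the main obstacle, is case (v): one must verify that the ``universal'' function $\Phi={\rm slog}(s)-1+\log^{\circ{\rm slog}(s)}(s)$ is genuinely continuous and strictly increasing across the infinitely many level boundaries $\exp^{\circ n}(1)$ — i.e. that the integer level counter ${\rm slog}$ increases by exactly $1$ precisely as the ``fractional part'' $\log^{\circ{\rm slog}(s)}(s)$ falls from $1$ to $0$ — and that the piecewise substitution then assembles to $\sum_{n\ge2}\int_{n-1}^{n}G(v)\,\rd v=\int_1^\infty G(v)\,\rd v$. Keeping the iterated logarithms straight at these boundaries is where care is required; everything else is a direct computation.
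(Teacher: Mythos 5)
Your proposal is correct on the substance and is essentially the paper's own argument: the substitution $v=\Phi(s)$ with $G(v)=v^{-(1+\eps)}$ is exactly the explicit antiderivative $F(s)=-\frac{1}{\eps}\,\Phi(s)^{-\eps}$ that the paper records in Remark \ref{remthm2} and integrates via the fundamental theorem of calculus, and your treatment of $\eps=0$ (divergence of $\int_1^\infty v^{-1}\,\rd v$, i.e.\ $F=\log\circ\Phi\to\infty$) matches the paper's second list of antiderivatives. One caveat: your assertion that the one-sided limits of $f$ agree at the breakpoint ``by design'' fails for (iv) with $n\geq 2$, since the constant $1/\exp^{\circ n}(1)$ does not equal the right-hand limit $\prod_{k=1}^{n}1/\exp^{\circ k}(1)$ of the tail formula at $s=\exp^{\circ n}(1)$; this is a blemish of the statement as written (the paper's proof never checks continuity), the jump is downward so positivity, monotonicity and both integrability conclusions are unaffected, but you should not claim the match holds.
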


\begin{rem}\label{remthm2}
The functions in {\rm(i)--(iv)} are the piecewise derivatives of the following respective functions:
\begin{enumerate}
\item[\rm (i)] $F(s)=-\frac{1}{\eps}\e^{-\eps s}$;
\item[\rm (ii)] $F(s)=-\frac{1}{\eps}\frac{1}{s^\eps}$;
\item[\rm (iii)] $F(s)=-\frac{1}{\eps}\frac{1}{\log(s)^\eps}$ for $s>\e$;
\item[\rm (iv)] $F(s)=-\frac{1}{\eps}\frac{1}{(\log^{\circ n}(s))^\eps}$ for $s>\exp^{\circ n}(1)$;
\item[\rm (v)] $F(s)=-\frac{1}{\eps}\frac{1}{({\rm slog}(s)-1+\log^{\circ{\rm slog}(s)}(s))^{\eps}}$ for $s>\e$.
\end{enumerate}
In {\rm (v)},  note that $\log^{\circ{\rm slog}(s)}(s)\in (0,1]$, so asymptotically we have $F(s)\sim -\frac{1}{\eps}\frac{1}{({\rm slog}(s))^\eps}$ as $s\to\infty$.
More examples can be generated by taking $F$ to be a piecewise continuous function with $F(s)\nearrow 0$ very slowly as $s\to\infty$,  and then take $f=F'$ piecewise.
There exists no function with the slowest convergence, as if $F(s)=-1/g(s)$ with $g(s)\nearrow \infty$ slowly, then $g(g(s))\nearrow\infty$ even slower and hence $-1/g(g(s))\nearrow 0$ even slower than $F(s)$.
\end{rem}

\begin{proof}[Proof of Proposition{\rm~\ref{prop:ex}}]
By Remark \ref{remthm2}, we can write $\int_0^{\infty}f(s)\,\rd s=\int_0^a  f(s)\,\rd s+ F(a)$ for any $a\in (0,\infty)$. This proves that $f$ is integrable.
It is left to show that if $\eps=0$, then $\int_0^\infty f(s)\,\rd s=\infty$.
The corresponding antiderivatives (modulo additive constants) are the following functions:
\begin{enumerate}
\item[\rm (i)] $F(s)=s$;
\item[\rm (ii)] $F(s)=\log(s)$;
\item[\rm (iii)] $F(s)=\log^{\circ 2}(s)$ for $s>\e$;
\item[\rm (iv)] $F(s)=\log^{\circ (n+1)}(s)$ for $s>\exp^{\circ n}(1)$;
\item[\rm (v)] $F(s)=\log({\rm slog}(s)-1+\log^{\circ{\rm slog}(s)}(s))$ for $s>\e$.
\end{enumerate}
Since each of these functions satisfies $\lim_{s\to\infty}F(s)=\infty$, the corresponding $f=F'$ is not integrable.
\end{proof}

\section*{Acknowledgements}
The author thanks Jean-Claude Cuenin, Rupert L.\ Frank and Franti\v sek \v Stampach for helpful discussions.

\bibliography{mybib}{}
\bibliographystyle{acm}

\end{document}